\newtheorem{thm}{Theorem}[section]
\newtheorem{prop}[thm]{Proposition}
\newtheorem{lem}[thm]{Lemma}
\newtheorem{con}[thm]{Conjecture}
\newtheorem{asm}{Assumption}
\theoremstyle{remark}
\newtheorem{rem}[thm]{Remark}
\theoremstyle{definition}
\newcommand{\ra}{\rightarrow}
\newcommand{\Z}{\mathbb Z}     
\renewcommand{\d}{\delta}
\renewcommand{\k}{\kappa}
\newcommand{\e}{\varepsilon}
\newcommand{\s}{\sigma}
\newcommand{\ind}[1]{ \mathbf{1}_{ \{ #1 \} } } 
\newcommand{\w}{\omega}              
\renewcommand{\P}{\mathbb{P}}        
\newcommand{\E}{\mathbb{E}}          
\newcommand{\be}{\begin{equation}}
\newcommand{\ee}{\end{equation}}
\def\Pv{\mathbf{P}}  \def\Ev{\mathbf{E}}   
\title[Strong transience for RWRE]{Strong transience of one-dimensional random walk in a random environment}
\author{Jonathon Peterson}
\address{Jonathon Peterson\\Purdue University\\Department of Mathematics\\150 N University Street\\West Lafayette, IN  47907\\USA}
\email{peterson@math.purdue.edu}
\urladdr{http://www.math.purdue.edu/~peterson}
\thanks{J. Peterson was partially supported by NSA grants H98230-13-1-0266 and H98230-15-1-0049.}
\subjclass[2010]{Primary: 60K37; Secondary: 60G50}
\keywords{Random walk in a random environment, strong transience}
\date{\today}
\begin{document}

\begin{abstract}
A transient stochastic process is considered \emph{strongly transient} if conditioned on returning to the starting location, the expected time it takes to return the the starting location is finite. 
We characterize strong transience for a one-dimensional random walk in a random environment. 
We show that under the quenched measure transience is equivalent to strong transience, while under the averaged measure strong transience is equivalent to ballisticity (transience with non-zero limiting speed).  
\end{abstract}

\maketitle

\section{Introduction and statement of main results}

The notions of transience and recurrence of stochastic processes are well known, but somewhat less well known is the notion of \emph{strong transience}. 
Let $\{Z_n\}_{n\geq 0}$ be a stochastic process on some countable state space, and let $ \mathfrak{R} = \inf\{ n \geq 1: Z_n = Z_0 \}$ be the first time that the process returns to its initial location. 
The process $Z_n$ is said to be stongly transient if it is transient and $\Ev[\mathfrak{R} | \, \mathfrak{R} < \infty] < \infty$. 
(This is a ``strong'' notion of transience since it implies that any returns to the starting point must happen relatively quickly.) 
If $Z_n$ is transient but $\Ev[ \mathfrak{R}| \, \mathfrak{R}<\infty] = \infty$, then we will say that the process is weakly transient. 
In this paper we will consider one-dimensional random walks in a random environment (RWRE) and will give a simple characterization of strong transience when the distribution of the environment is an i.i.d.\ product measure. 
Our main results show that the characterization of strong transience is different under the quenched and averaged measures. 
Under the quenched measure, we will show that strong transience is equivalent to transience, while under the averaged measure strong transience is equivalent to transience with non-zero limiting speed. 

The question of strong transience for RWRE was posed by Kosygina and Zerner as Problem 1.6 in \cite{kzEERW} where they also studied strong transience of one-dimensional excited random walks. 
While the models of RWRE and excited random walks are very different models of self-interacting random motions, there is a remarkable similarity in many of the results in the two models. In particular, (under the averaged measures) the limiting distributions for transient RWRE \cite{kksStable} are very similar to those for transient excited random walks \cite{bsRGCRW,kzPNERW,kmLLCRW}. In both models, the limiting distributions show three distinct types of behavior that can occur depending on the particular parameters of the model: 1) transience with sublinear speed and non-Gaussian limiting distributions, 2) transience with non-zero speed and non-Gaussian limiting distributions, and 3) transience with non-zero speed and Gaussian limiting distributions. 
In \cite{kzEERW} it was shown that within the second regime (non-zero limiting speed and non-Gaussian limiting distributions) there is a transition from weak to strong transience. 
Our results, however, show that (under the averaged measure) the transition from weak to strong transience coincides with the transition from sublinear speed to non-zero limiting speed.

\subsection{One-dimensional RWRE}
An \emph{environment} for a one-dimensional RWRE is a sequence $\w = \{\w_x\}_{x\in \Z} \in [0,1]^\Z$. 
Given an environment $\w$ and a fixed $z\in\Z$, the random walk $\{X_n\}_{n\geq 0}$ started at $z$ in the environment $\w$ is the Markov chain with law $P_\w^z$ given by $P_\w^z(X_0 = z) = 1$ and
\[
 P_\w^z( X_{n+1} = y | \, X_n = x ) = \begin{cases} \w_x & y = x+1 \\ 1-\w_x & y = x-1 \\ 0 & \text{otherwise}. \end{cases}
\]
For random walks in random environments, we also let the environment $\w$ be chosen randomly. In this paper we will make the following assumption on the randomness of the environment. 
\begin{asm}\label{asmiid}
 The distribution $P$ on environments is such that $\w = \{\w_x\}_{x\in \Z}$ is an i.i.d.\ sequence. 
\end{asm}
The distribution $P_\w^z$ of the random walk for a fixed environment $\w$ is called the \emph{quenched} law of the random walk. 
By averaging the quenched law with respect to the distribution $P$ on environments 
we obtain what is called the \emph{averaged} (or \emph{annealed}) law 
\[
 \P^z(\cdot) = E_P[ P_\w^z(\cdot) ]. 
\]
Here $E_P$ denotes the expectation with respect to the measure $P$ on environments. Expectations with respect to the quenched and averaged measures on the random walk will be denoted by $E_\w^z$ and $\E^z$, respectively. 
It will often be the case that we will be interested in the RWRE started at $X_0 = 0$, and thus we will use the notation $P_\w$ and $\P$ to denote $P_\w^0$ and $\P^0$, respectively (corresponding expectations will be denoted $E_\w$ and $\E$.)

The study of RWRE was initiated in Solomon's seminar paper \cite{sRWRE}. 
In this paper, Solomon gave a characterization of recurrence/transience of one-dimensional RWRE and also calculated the limiting speed. 
Before stating Solomon's results, we first introduce some notation. 
Let
\begin{equation}\label{rhodef}
 \rho_x = \frac{1-\w_x}{\w_x}, \quad \text{for } x \in \Z. 
\end{equation}
With this notation, Solomon's results can be stated as follows. 
\begin{thm}[\cite{sRWRE}]\label{thm:sol}
Assume that the distribution on environments $P$ satisfies Assumption \ref{asmiid}, and assume that $E_P[\log \rho_0]$ exists. 
\begin{enumerate}
 \item The recurrence or transience of the RWRE is determined by the value of $E_P[\log \rho_0]$. 
\begin{itemize}
 \item If $E_P[\log \rho_0] < 0$ then $\P( \lim_{n\ra\infty} X_n = +\infty) = 1$. 
 \item If $E_P[\log \rho_0] > 0$ then $\P( \lim_{n\ra\infty} X_n = -\infty) = 1$.
 \item If $E_P[\log \rho_0] = 0$ then $\P( \liminf_{n\ra\infty} X_n = -\infty \text{ and } \limsup_{n\ra\infty} X_n = +\infty) = 1$.
\end{itemize}
 \item The limiting speed of the RWRE is determined by the values of $E_P[\rho_0]$ and $E_P[\rho_0^{-1}]$. In particular,
\begin{equation}\label{speedform}
 \lim_{n\ra\infty} \frac{X_n}{n} 
=\begin{cases}
  \frac{1-E_P[\rho_0]}{1+E_P[\rho_0]} & \text{if } E_P[\rho_0] < 1\\
-  \frac{1-E_P[\rho_0^{-1}]}{1+E_P[\rho_0^{-1}]} & \text{if } E_P[\rho_0^{-1}] < 1 \\
  0 & \text{if } E_P[\rho_0^{-1}], \, E_P[\rho_0] \geq 1,
 \end{cases}
\quad \P\text{-a.s.}
\end{equation}
\end{enumerate}
\end{thm}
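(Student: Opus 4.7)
My plan is to handle the two parts of Solomon's theorem by exploiting explicit formulas for the quenched chain, then upgrade to averaged statements via the i.i.d.\ structure of~$P$ and the ergodic theorem.

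For part~(1), the natural tool is the quenched potential. Defining $\Pi_x := \prod_{j=1}^{x}\rho_j$ for $x\geq 1$, $\Pi_0 := 1$, and $\Pi_x := \prod_{j=x+1}^{0}\rho_j^{-1}$ for $x\leq -1$, one checks that the increments $\Pi_{x-1}$ come from a function that is harmonic for $P_\w$. The standard gambler's-ruin identity then gives, for $M,N>0$,
\[
P_\w^0(T_N < T_{-M}) \;=\; \frac{\sum_{i=-M}^{-1}\Pi_i}{\sum_{i=-M}^{N-1}\Pi_i}.
\]
Letting $M,N\to\infty$ shows that the walk is transient to $+\infty$ iff $\sum_{i\geq 0}\Pi_i<\infty=\sum_{i<0}\Pi_i$, transient to $-\infty$ iff the symmetric inequality holds, and recurrent iff both sums diverge. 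Since $\tfrac{1}{x}\log\Pi_x = \tfrac{1}{x}\sum_{j=1}^{x}\log\rho_j \to E_P[\log\rho_0]$ by the strong law, the three cases in the statement correspond exactly to the sign of $E_P[\log\rho_0]$ via exponential decay or growth of~$\Pi_x$.

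For part~(2), a one-step decomposition at site $x$ yields the quenched recursion
\[
E_\w^x[T_{x+1}] \;=\; \tfrac{1}{\w_x} + \rho_x\, E_\w^{x-1}[T_x],
\]
which iterates to $E_\w^0[T_1] = \sum_{k\leq 0}\tfrac{1}{\w_k}\prod_{j=k+1}^{0}\rho_j$. Taking $E_P$, using independence and the identity $1/\w_0 = 1+\rho_0$, this collapses to a geometric series summing to $(1+E_P[\rho_0])/(1-E_P[\rho_0])$ when $E_P[\rho_0]<1$ (which by Jensen forces transience to $+\infty$). The increments $\tau_k := T_k-T_{k-1}$ are stationary ergodic under $\P$, so Birkhoff gives $T_n/n \to \E[T_1]$ a.s., and inverting via $T_{X_n}\leq n<T_{X_n+1}$ produces the first speed formula. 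The case $E_P[\rho_0^{-1}]<1$ is symmetric.

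The main obstacle is the zero-speed regime $E_P[\rho_0],E_P[\rho_0^{-1}]\geq 1$. In the transient sub-case the trick is to truncate: the ergodic theorem applied to $\tau_k\wedge M$ gives $\liminf_n T_n/n \geq E_P[\tau_0\wedge M]$, so letting $M\to\infty$ (with $E_P[\tau_0]=\infty$) forces $T_n/n\to\infty$, whence $X_n/n\to 0$ a.s. In the recurrent sub-case $E_P[\log\rho_0]=0$ (which by Jensen sits inside $E_P[\rho_0],E_P[\rho_0^{-1}]\geq 1$), $T_n$ need no longer be finite; here I would argue by monotone coupling, sandwiching $X_n$ between two walks obtained by tilting $\w$ up and down by a small deterministic amount, each of which is transient with zero speed by the preceding step, and then conclude $X_n/n\to 0$ a.s. This transfer from the transient estimates to the recurrent case is the most delicate point, since no regenerative decomposition is available to drive the argument directly.
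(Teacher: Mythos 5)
The paper offers no proof of this statement: it is Solomon's theorem, quoted from \cite{sRWRE}, and the only overlapping material is Section \ref{sec:exact}, where the speed formula is rederived from the hitting-time identities exactly as in your part (2). Your outline is the standard argument (gambler's ruin for recurrence/transience; the hitting-time recursion plus Birkhoff and truncation for the speed) and is essentially correct, but two steps are papered over. First, in part (1) the case $E_P[\log\rho_0]=0$ does not follow from ``exponential decay or growth'': the SLLN gives only $\frac{1}{x}\log\Pi_x\to 0$, which is compatible with $\sum_{i\geq 0}\Pi_i<\infty$ (think $\Pi_i = e^{-\sqrt{i}}$). What you need is that the mean-zero, nondegenerate random walk $\sum_j \log\rho_j$ oscillates, i.e.\ has $\limsup = +\infty$ and $\liminf=-\infty$ a.s.\ (Chung--Fuchs or Hewitt--Savage), so that $\Pi_i\geq 1$ infinitely often on each side and both series diverge; the degenerate case $\rho_0\equiv 1$ must be treated separately. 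Second, the inversion sandwich $T_{X_n}\leq n < T_{X_n+1}$ is false as written, since the walk may already have exceeded $X_n+1$ and backtracked; the correct statement uses $M_n=\max_{k\leq n}X_k$, after which one must show $(M_n-X_n)/n\to 0$ a.s. Two further points deserve a word but are not obstructions: stationarity and ergodicity of $(\tau_k)$ under $\P$ is a genuine (if standard) lemma, since $\tau_k$ is not a function of a single block of the environment; and in your recurrent-case sandwich the tilted environments may yield small \emph{positive} speed rather than zero speed, so you must invoke the explicit speed formula and let the tilt tend to $0$.
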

\begin{rem}
 Note that Jensen's inequality implies that $1/E_P[\rho_0^{-1}] \leq E_P[\rho_0]$, so the formula for the speed in \eqref{speedform} covers the three possible cases. 
\end{rem}

We are now ready to state the main result of the paper. For simplicity, we will state our results for RWRE that are transient to the right. 
\begin{thm}\label{thm:st}
Assume that the distribution on environments $P$ satisfies Assumption \ref{asmiid} and that $E_P[\log \rho_0] \in (-\infty,0)$. Then
 \begin{enumerate}
 \item \label{qst} $E_\w[  \mathfrak{R}| \,  \mathfrak{R}<\infty] < \infty $ for $P$-a.e.\ environment $\w$. 
 \item \label{ast} $\E[ \mathfrak{R} | \,  \mathfrak{R}<\infty]  < \infty \iff E_P[\rho_0] < 1$.  
 \end{enumerate}
\end{thm}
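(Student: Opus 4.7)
I would decompose $\mathfrak R\,\one_{\{\mathfrak R<\infty\}}$ by the first step. Since $X_n\to+\infty$ almost surely under $P_\omega$, conditional on $X_1=-1$ the walk returns to $0$ in finite time a.s., while conditional on $X_1=+1$ it does so only with probability $f(1):=P_\omega^1(T_0<\infty)<1$. Writing $u(1):=E_\omega^1[T_0\,\one_{\{T_0<\infty\}}]$ and $\tau_{-1}:=E_\omega^{-1}[T_0]$,
\[
E_\omega[\mathfrak R\,\one_{\{\mathfrak R<\infty\}}] = \omega_0\bigl(f(1)+u(1)\bigr) + (1-\omega_0)\bigl(1+\tau_{-1}\bigr).
\]
By the iid assumption $\omega_0$ is independent of $f(1),u(1),\tau_{-1}$, which involve only $(\omega_x)_{x\ge 1}$ or $(\omega_x)_{x\le -1}$, so the two contributions can be analyzed separately.

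The left contribution is handled by iterating $\tau_x=\omega_x^{-1}+\rho_x\tau_{x-1}$ to obtain $\tau_{-1}=\sum_{j\ge 0}\omega_{-1-j}^{-1}\prod_{k=0}^{j-1}\rho_{-1-k}$. The SLLN forces these products to decay exponentially $P$-a.s.\ under $E_P[\log\rho_0]<0$, giving quenched finiteness. Under $P$, iid independence gives $E_P[\tau_{-1}]=E_P[\omega_0^{-1}]/(1-E_P[\rho_0])$, which is finite iff $E_P[\rho_0]<1$ (note $E_P[\omega_0^{-1}]=1+E_P[\rho_0]$, so the numerator poses no issue). This already yields the left half of (i) and the ``only if'' direction of (ii).

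For the right contribution, let $\Pi_k=\prod_{i=1}^k\rho_i$ and $S=\sum_{k\ge 0}\Pi_k$, so $f(x)=\sum_{k\ge x}\Pi_k/S$ and $S<\infty$ $P$-a.s.\ by transience. The Doob $h$-transform by $f$ turns the walk conditioned on $\{T_0<\infty\}$ into a Markov chain on $\mathbb Z_{\ge 0}$ with transitions $\tilde\omega_x=\omega_xf(x+1)/f(x)$ that hits $0$ a.s.; applying the standard RWRE expected hitting-time formula to this chain and multiplying by $f(1)$ simplifies after telescoping to
\[
u(1) = \frac{1}{S^2}\sum_{m\ge 1}\Pi_m(1+\rho_m)W_m^2,\qquad W_m := \sum_{j\ge 0}\prod_{i=m+1}^{m+j}\rho_i.
\]
For (i), the SLLN gives exponential decay of $\Pi_m$, while $\rho_m$ and $W_m$ grow at most polynomially $P$-a.s.\ (the latter via Kesten's tail theorem applied to the fixed point $W\stackrel{d}{=}1+\rho W'$), so the series converges $P$-a.s.; combined with the left half this establishes (i).

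The main obstacle is the ``if'' direction of (ii), showing $E_P[u(1)]<\infty$ under only $E_P[\rho_0]<1$. The naive bound $1/S^2\le 1$ together with independence of $(\Pi_m,\rho_m)$ from $W_m$ requires $E_P[W_m^2]<\infty$, which is equivalent to the strictly stronger $E_P[\rho_0^2]<1$. To recover the sharp threshold I would use $S\ge T_m+\Pi_mW_m$ with $T_m=\sum_{k<m}\Pi_k$ and split the inner expectation at $W_m=T_m/\Pi_m$: if the Kesten exponent $s$ defined by $E_P[\rho_0^s]=1$ lies in $(1,2)$, truncated second-moment estimates on the small-$W$ side and the tail bound $P(W>t)\lesssim t^{-s}$ on the large-$W$ side together give
\[
E_P\bigl[\Pi_m(1+\rho_m)W_m^2/S^2 \,\big|\,\rho_1,\dots,\rho_m\bigr]\lesssim (1+\rho_m)\Pi_m^{s-1}T_m^{-s},
\]
which after averaging and summing becomes a geometric series in $E_P[\rho_0^{s-1}]$; this is strictly less than $1$ by convexity of $t\mapsto E_P[\rho_0^t]$ (equal to $1$ at $t=0$ and $t=s$, hence $<1$ on $(0,s)$). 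In the complementary regimes where no Kesten exponent lies in $(1,2)$, $E_P[W^2]<\infty$ and the naive bound already suffices. This yields (ii).
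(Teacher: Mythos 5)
Your overall architecture matches the paper's: condition on the first step, note that $E_\w^{-1}[T_0]$ handles one branch and yields the ``only if'' direction of (ii) via $E_P[\tau_{-1}]=E_P[\w_0^{-1}]/(1-E_P[\rho_0])$, and treat the other branch through the Doob $h$-transform $\tilde\w_x=\w_x R_{x+1}/(1+R_{x+1})$ and the explicit hitting-time formula (your expression for $u(1)$ differs from the paper's \eqref{cqET} in bookkeeping but has the same structure). The gaps are in the two places where you invoke Kesten's tail theorem, which is not available under the stated hypotheses. In part (i), the claim that $\rho_m$ and $W_m$ grow at most polynomially $P$-a.s.\ is false in general: the theorem assumes only $E_P[\log\rho_0]\in(-\infty,0)$, so $\rho_0$ may have no finite positive moments at all (e.g.\ $P(\log\rho_0>t)=e^{-\sqrt{t}}$ for large $t$), Kesten's theorem does not apply, and since $W_m\ge\rho_{m+1}$ with the $\rho_{m+1}$ i.i.d., the second Borel--Cantelli lemma gives $W_m>m^K$ infinitely often for every $K$. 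What is true, and sufficient, is the subexponential bound the paper uses: the SLLN estimate $e^{-(c_0+\e)n}\le\Pi_{1,n}\le e^{-(c_0-\e)n}$ yields $\Pi_{m+1,m+k}\le e^{2\e m}e^{-(c_0-\e)k}$, hence $W_m\le 1+e^{2\e m}/(1-e^{-c_0+\e})$, which still loses to the exponential decay of $\Pi_{1,m}$. So (i) is repairable, but your stated reason does not stand.

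The more serious gap is the ``if'' direction of (ii). First, Kesten's theorem requires that $\log\rho_0$ be non-lattice and that a root $s$ of $E_P[\rho_0^s]=1$ exist with an accompanying moment condition; none of this follows from $E_P[\rho_0]<1$ and Assumption \ref{asmiid}. Second, your fallback dichotomy ``no Kesten exponent in $(1,2)$ implies $E_P[W^2]<\infty$'' is false: if $s=2$ then $E_P[\rho_0^2]=1$ and $E_P[W^2]=\infty$, and if $E_P[\rho_0^u]<1$ for $u<3/2$ while $E_P[\rho_0^u]=\infty$ for $u>3/2$ there is no Kesten exponent at all, yet $E_P[W^2]\ge E_P[\rho_1^2]=\infty$. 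Your truncation scheme could be salvaged without Kesten by replacing the tail asymptotics with the elementary bound $\|W\|_u\le(1-\|\rho_0\|_u)^{-1}$ for any $u\in[1,2]$ with $E_P[\rho_0^u]<1$ (Minkowski applied to $W\overset{d}{=}1+\rho_0W'$), but as written the step is a genuine gap. For comparison, the paper avoids tail asymptotics entirely: after shifting the environment it splits $\sum_{n\ge0}\Pi_{-n,0}/(1+R_{-n})^2$ at the stopping time $\pi(R_1)=\inf\{n\ge0:\Pi_{-n,0}\le 1/R_1\}$, controls the pre-crossing piece with the negative-drift estimate $\Ev[\sum_{n=0}^{\nu(t)-1}e^{-S_n}]\le Ce^t$ of Proposition \ref{grwxsum}, and arrives at the bound $CE_P[1+R_1]+(E_P[1+R_1])^2$, which requires only $E_P[\rho_0]<1$.
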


Clearly strong transience under the averaged measure requires both $\E[ \mathfrak{R} | \, X_1=-1, \,  \mathfrak{R}<\infty]< \infty$ and  $\E[ \mathfrak{R} | \, X_1=1, \,  \mathfrak{R}<\infty]< \infty$. If the random walk is transient to the right, then it is the second of these conditional expectations that is more interesting. 
In the proof of Theorem \ref{thm:st} we will show that $\E[ \mathfrak{R} | \, X_1=-1, \,  \mathfrak{R}<\infty]< \infty \iff E_P[\rho_0] < 1$, but for the other conditional expectation we only need that $E_P[\rho_0]<1$ implies that $\E[ \mathfrak{R} | \, X_1=1, \, \mathfrak{R}<\infty]< \infty$. 
The next theorem gives the converse of this last statement under a slightly stronger assumption on the environment. 

\begin{thm}\label{thm:srt}
 Assume that the distribution on environments $P$ satisfies Assumption \ref{asmiid} and that $E_P[\log \rho_0] \in (-\infty,0)$.
\begin{enumerate}
 \item\label{srtp1} If $E_P[\rho_0] < 1$ then $\E[ \mathfrak{R} | \, X_1=1, \,  \mathfrak{R}<\infty]< \infty$.
 \item\label{srtp2} If either $E_P[\rho_0] > 1$ or $E_P[\rho_0] = 1$ and $E_P[\rho_0 \log \rho_0] < \infty$ then $\E[ \mathfrak{R} | \, X_1=1, \,  \mathfrak{R}<\infty] = \infty$. 
\end{enumerate}
\end{thm}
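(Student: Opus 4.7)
The plan is to reduce $\E[\mathfrak{R}\indd{X_1=1,\,\mathfrak{R}<\infty}]$ to the $P$-expectation of a single explicit function of the environment, and then read off the dichotomy from Kesten's renewal-theorem tail of $S := 1 + \rho_1 + \rho_1\rho_2 + \cdots$. Conditioning on $X_1=1$ and applying the Markov property at time $1$ gives
\[
\E[\mathfrak{R}\indd{X_1=1,\,\mathfrak{R}<\infty}] = E_P[\omega_0]\bigl(E_P[p_1] + E_P[u_1]\bigr),
\]
with $p_1 := P_\omega^1(T_0<\infty)$ and $u_1 := E_\omega^1[T_0\indd{T_0<\infty}]$; the factorization uses that $\omega_0$ is independent of $\omega_1,\omega_2,\ldots$. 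Since $p_1 \le 1$, the question reduces to whether $E_P[u_1]<\infty$. Decomposing $T_0\indd{T_0<\infty}$ by intermediate positions and the strong Markov property yields $u_1 = \sum_{y\ge 1} p_y G_\omega(1,y)$, where $G_\omega(1,y) = E_\omega^1[\#\{n<T_0:X_n=y\}]$ is the Green's function of the walk killed at $0$. Evaluating $p_y$ and $G_\omega(1,y)$ by standard gambler's-ruin identities in the potential $\Pi_y := \rho_1\cdots\rho_y$ produces the closed form
\[
u_1 = \frac{1}{S^2}\sum_{y \ge 1}\frac{\Pi_y W_{y+1}^2}{\omega_y},
\]
where $W_{y+1} := 1 + \rho_{y+1} + \rho_{y+1}\rho_{y+2} + \cdots$ is independent of $\omega_0,\omega_1,\ldots,\omega_y$ and equal in law to $S$.

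For part (i), the hypothesis $E_P[\rho_0]<1$ forces the Kesten index $\kappa>1$ (the unique positive $\kappa$ with $E_P[\rho_0^\kappa]=1$). Integrating $W_{y+1}$ out using the identity $S = S_{y-1} + \Pi_y W_{y+1}$ (where $S_{y-1} := \sum_{z=0}^{y-1}\Pi_z$) together with the Kesten tail $\P(W>t)\sim c\,t^{-\kappa}$, one bounds the conditional expectation $E_W[W^2/(S_{y-1}+\Pi_y W)^2]$ polynomially in $S_{y-1}$ and $\Pi_y$. Substituting back and invoking independence, the $y$-th term of $E_P[u_1]$ is bounded by a constant times $E_P[\rho_0^{\kappa-1}]^y$, and log-convexity of $s\mapsto E_P[\rho_0^s]$ together with the normalization $E_P[\rho_0^\kappa]=1$ force $E_P[\rho_0^{\kappa-1}]<1$ for $\kappa>1$, so the series converges and $E_P[u_1]<\infty$. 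For part (ii) I would lower-bound $u_1$ by its $y=1$ term $\rho_1 W_2^2/(\omega_1(1+\rho_1 W_2)^2)$ and apply the same Kesten asymptotic now with $\kappa\le 1$; at the critical index $\kappa=1$, the extra hypothesis $E_P[\rho_0\log\rho_0]<\infty$ is precisely Kesten's non-degeneracy condition, which guarantees a strictly positive tail constant $c>0$ and is what finally forces $E_P[u_1]=\infty$.

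The main obstacle is the two-sided behavior of the integrand $W^2/(a+bW)^2$: neither the regime $bW\ll a$ nor $bW\gg a$ dominates globally, so the two contributions must be matched through the Kesten tail estimate, and at the critical index $\kappa=1$ it is the strict positivity of the Kesten constant (secured by $E_P[\rho_0\log\rho_0]<\infty$) that tips the balance from convergence to divergence.
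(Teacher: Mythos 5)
Your overall shape (reduce to an explicit environment expectation of the form $\sum_{y}E_P[\Pi_{1,y}\,W_{y+1}^2/S^2]$ and read off the answer from the tail of $S\overset{d}{=}1+R_1$) is the same as the paper's, but both halves of your argument have genuine gaps. The fatal one is in part (ii): lower-bounding $u_1$ by its $y=1$ term cannot work. Conditioning on $\rho_1=r$ and using the very tail $P(W>t)\asymp t^{-\kappa}$ you invoke, one gets
\[
E\!\left[\frac{rW^2}{(1+rW)^2}\right]\;\asymp\; r\,E[W^2\ind{W<1/r}]+r^{-1}P(W>1/r)\;\asymp\; r^{\kappa-1},
\]
so at the critical index $\kappa=1$ (the case $E_P[\rho_0]=1$ you single out) the $y=1$ term has \emph{finite} expectation, and the Kesten constant being positive does not change that. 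The divergence in this regime is not produced by any single term blowing up; it comes from the fact that \emph{every} term of the series is bounded below by the same positive constant. The paper gets this by writing $1+R_1=1+R_{1,n}+\Pi_{1,n}R_{n+1}$, restricting to the event $\{\Pi_{1,n}R_{n+1}\geq 1+R_{1,n}\}$, and applying the one-sided tail bound $P(R_1\geq t)\geq C/t$ conditionally on $\sigma(\w_x:x\leq n)$, which yields $E_P[\Pi_{1,n}R_{n+1}^2/(1+R_1)^2]\geq \tfrac{C}{4}E_P[1/(1+R_1)]>0$ uniformly in $n$. A second, independent problem is that Kesten's tail theorem is not available under the stated hypotheses: the theorem does not assume $\log\rho_0$ is non-lattice, nor that a Kesten exponent $\kappa$ with $E_P[\rho_0^\kappa]=1$ exists (it can fail when $E_P[\rho_0]=\infty$), nor $E_P[\rho_0^\kappa\log\rho_0]<\infty$ except in the borderline case $\kappa=1$. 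The paper replaces Kesten's asymptotic by the weaker lower bound $P(R_1\geq t)\geq C/t$ (Lemma \ref{Rtail}), proved from a Cram\'er-type change of measure that covers the lattice case and handles $E_P[\rho_0]=\infty$ by truncating $\rho_i\wedge M$.

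Part (i) has the same over-reliance on Kesten: $E_P[\rho_0]<1$ does not force a $\kappa>1$ to exist (take $\rho_0<1$ a.s., so $E_P[\rho_0^s]<1$ for all $s>0$), so "the Kesten tail with $\kappa>1$" is not a legitimate starting point, and your claimed geometric ratio $E_P[\rho_0^{\kappa-1}]$ sits exactly at the exponent where the companion moment $E[W^{2-\theta}]=E[W^{\kappa}]$ diverges under Kesten's theorem, so the interpolation you defer to ("bounds the conditional expectation polynomially") does not close as stated---this is precisely the "main obstacle" you flag but do not resolve. The paper avoids all tail asymptotics here: it only needs $E_P[1+R_1]=(1-E_P[\rho_0])^{-1}<\infty$, and controls $\sum_n \Pi_{-n,0}/(1+R_{-n})^2$ by splitting at the stopping time $\pi(R_1)=\inf\{n:\Pi_{-n,0}\leq 1/R_1\}$ and invoking the elementary ladder-epoch estimate $E[\sum_{n<\pi(A)}\Pi_{-n,0}^{-1}]\leq CA$ of Proposition \ref{grwxsum}. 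You should either prove your deferred estimate under the bare hypothesis $E_P[\rho_0]<1$ (without assuming $\kappa$ exists) or adopt a decomposition of this kind.
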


The assumptions in part \ref{srtp2} of Theorem \ref{thm:srt} are only slightly stronger than $E_P[\rho_0 ] \geq 1$. We conjecture, however, that the result is true under this weaker assumption as well. 
\begin{con}
 If $P$ satisfies Assumption \ref{asmiid} and $E_P[\log\rho_0] \in (-\infty,0)$, then 
\[
 \E[ \mathfrak{R} | \, X_1=1, \,  \mathfrak{R}<\infty]< \infty \quad \iff \quad E_P[\rho_0] < 1. 
\]
\end{con}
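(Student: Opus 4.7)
The plan is to prove the only case not covered by Theorem~\ref{thm:srt}\ref{srtp2}, namely that when $E_P[\rho_0]=1$ and $E_P[\rho_0\log\rho_0]=\infty$, we still have $\E[\mathfrak R \mid X_1=1,\,\mathfrak R<\infty]=\infty$. Because the walk is nearest-neighbor, on the event $\{X_1=1,\,T_N<T_0\}$ one has $\mathfrak R \geq 2N$, and therefore
\[
\E\bigl[\mathfrak R\,\indd{X_1=1,\,\mathfrak R<\infty}\bigr] \;\geq\; 2\sum_{N \geq 1}\P\bigl(X_1=1,\,T_N<T_0,\,T_0<\infty\bigr).
\]
Using the classical quenched gambler's-ruin formula $P_\w^1(T_N<T_0)=1/S_N$ with $S_N := \sum_{j=0}^{N-1}\prod_{i=1}^{j}\rho_i$, together with the strong Markov property at $T_N$, the task reduces to showing
\[
\sum_{N \geq 1} E_P\!\left[\frac{\w_0\,P_\w^N(T_0<\infty)}{S_N}\right] = \infty.
\]

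Observe that $S_N\to S_\infty := \sum_{j\geq 0}\prod_{i=1}^{j}\rho_i$ a.s., with $S_\infty$ a.s.\ finite since $\prod_{i=1}^{j}\rho_i$ decays exponentially by the law of large numbers. So $1/S_N\leq 1$ and $1/S_N \to 1/S_\infty > 0$ a.s., whence $E_P[1/S_N]$ is bounded below by a positive constant. The series would therefore diverge trivially if $P_\w^N(T_0<\infty)$ stayed bounded below in probability; however, $P_\w^N(T_0<\infty)=\prod_{k=1}^N Q_k$ with $Q_k := P_\w^k(T_{k-1}<\infty)$ and, by ergodicity and $E_P[\log Q_1]<0$, this product decays exponentially $P$-a.s. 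The structural observation enabling a useful estimate is that $S_N$ depends only on $\{\w_j : 1 \leq j \leq N-1\}$ whereas each $Q_k$ is determined by $\{\w_j : j \geq k\}$ via the backward recursion $Q_k = (1-\w_k)/(1-\w_k Q_{k+1})$, so the numerator and denominator are correlated only through their shared middle segment.

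The central step I would attempt is to produce, for each large $N$, an environmental event $A_N$ of probability at least $cL(N)/N$ on which $S_N$ is of order $N$ and $P_\w^N(T_0<\infty)$ is bounded below by a constant, with $L$ slowly varying so that $\sum_N L(N)/N=\infty$ by Karamata. A natural candidate is the event that the potential $V(x) = \sum_{i=1}^x \log\rho_i$ has a peak of height $\gtrsim \log N$ somewhere in $(N,\infty)$: on such an event the walk from $N$ is reflected back to $0$ with conditional probability bounded below, while the environment on $[1,N)$ can be independently required to be typical. The heavy right tail of $\rho_0$ forced by $E_P[\rho_0]=1$ with $E_P[\rho_0\log\rho_0]=\infty$---which via Grincevi\v{c}ius's theorem for the random affine fixed-point equation $S_\infty \stackrel{d}{=} 1+\rho_1\,S_\infty'$ gives $\P(S_\infty>t)=L(t)/t$ with $L$ genuinely slowly varying (the assumption $E_P[\rho_0\log\rho_0]<\infty$ being precisely what Goldie's theorem needs to conclude $L\to c>0$)---is exactly what produces such peaks with probability of order $L(N)/N$. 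The main obstacle is to decouple the two halves of the environment across the nonlinear backward recursion for $(Q_k)$: the moment assumption used in Theorem~\ref{thm:srt}\ref{srtp2} is precisely what suppresses the rare configurations driving this divergence, so removing it requires quantifying their contribution rather than averaging them out, plausibly by conditioning on $\{\w_j : j \geq N\}$ and performing an inductive analysis of the $(Q_k)$-chain jointly with the partial sum $S_N$.
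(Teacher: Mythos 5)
This statement is labeled as a \emph{Conjecture} in the paper: the author proves the implication $E_P[\rho_0]<1\Rightarrow\E[\mathfrak R\mid X_1=1,\mathfrak R<\infty]<\infty$ and the converse under the extra hypothesis that either $E_P[\rho_0]>1$ or ($E_P[\rho_0]=1$ and $E_P[\rho_0\log\rho_0]<\infty$), and explicitly leaves open the remaining case $E_P[\rho_0]=1$, $E_P[\rho_0\log\rho_0]=\infty$. So there is no proof in the paper to compare against, and your task is genuinely to close that gap. You identify the open case correctly, and your initial reduction is sound: $\mathfrak R\geq 2N$ on $\{X_1=1,\,T_N<T_0<\infty\}$, the gambler's-ruin identity $P_\w^1(T_N<T_0)=1/S_N$ matches \eqref{htform}, and the strong Markov property at $T_N$ correctly reduces the problem to showing $\sum_N E_P[\w_0\,P_\w^N(T_0<\infty)/S_N]=\infty$.

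However, what follows is a research program, not a proof, and the part you defer is exactly the hard part. Two concrete problems. First, the arithmetic of your proposed events $A_N$ does not close: if on $A_N$ you have $S_N\asymp N$ and $P_\w^N(T_0<\infty)\geq c$ with $P(A_N)\gtrsim L(N)/N$, the $N$-th term is only $\gtrsim L(N)/N^2$, which is summable; you need $S_N$ bounded (order $1$, which is in fact the typical behavior since $S_N\to 1+R_1<\infty$ a.s.), not of order $N$, for the terms to be $\gtrsim L(N)/N$. Second, the tail asymptotic $\P(S_\infty>t)=L(t)/t$ with $L$ slowly varying and $\sum_N L(N)/N=\infty$ is not available in the generality you need: Kesten--Goldie requires precisely the moment $E_P[\rho_0\log\rho_0]<\infty$ that you have discarded, and the Grincevi\v{c}ius-type substitutes for the infinite-mean boundary case impose additional regular-variation hypotheses on the law of $\rho_0$ that Assumption \ref{asmiid} does not grant. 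Without such a tail bound you have no lower bound on the probability of the ``high peak beyond $N$'' events, and the decoupling of $S_N$ from $P_\w^N(T_0<\infty)$ across the shared environment --- which you correctly flag as the main obstacle --- is left entirely unaddressed. As it stands the proposal does not prove the conjecture.
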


\section{General random walk results}\label{sec:GRW}

In this section we record some general results on one-dimensional random walks that will be useful for analyzing the environment $\w$. 
Assume that $\xi_1,\xi_2,\ldots$ is an i.i.d.\ sequence of random variables
and let $S_n = \sum_{i=1}^n \xi_i$ for any $n\geq 1$. 
To avoid confusion with the probability measures associated to the RWRE, we will use $\Pv$ for the law of the sequence $(\xi_1,\xi_2,\ldots)$ and $\Ev$ for corresponding expectations. 
We will always assume that $\Ev[\xi_1] \in (-\infty,0)$ so that the random walk $S_n$ has negative drift. 
The following result gives asymptotics for the probability that the random walk $S_n$ goes above level $t\geq 0$ at some point.  
\begin{prop}\label{RWhp}
 Assume that $\Ev[\xi_1] < 0$ and that $\Ev[ e^{\gamma \xi_1}] = 1$ and $\Ev[ \xi_1 e^{\gamma \xi_1} ] < \infty$ for some $\gamma > 0$. 
\begin{enumerate}
 \item\label{p1} If the distribution of $\xi_1$ is non-lattice then there exists a constant $C>0$ such that 
\[
 \lim_{t \ra\infty} e^{\gamma t}\, \Pv\left( \sup_{n\geq 1} S_n \geq t \right) = C. 
\]
 \item\label{p2} If $\Pv( \xi_1 \in a \Z)$ for some $a > 0$, then there exists a constant $C>0$ such that 
\[
 \lim_{k \ra\infty} e^{\gamma k a} \, \Pv\left( \sup_{n\geq 1} S_n \geq k a \right) = C.
\]
\end{enumerate}
\end{prop}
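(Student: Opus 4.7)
The plan is to prove this via the classical Cram\'er--Lundberg exponential change of measure, reducing the tail asymptotic to an overshoot problem that is handled by the renewal theorem (in its non-lattice and lattice versions respectively).

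First I would introduce the exponentially tilted law $\tilde{\Pv}$ defined by
\[
\frac{d\tilde{\Pv}}{d\Pv}\bigg|_{\sigma(\xi_1,\ldots,\xi_n)} = e^{\gamma S_n},
\]
which is a probability measure precisely because $\Ev[e^{\gamma \xi_1}] = 1$. Under $\tilde{\Pv}$ the increments remain i.i.d.\ with the tilted marginal, and since $M(\theta) := \Ev[e^{\theta \xi_1}]$ satisfies $M(0) = M(\gamma) = 1$ with $M$ strictly convex (as $\Ev[\xi_1] < 0$ rules out degeneracy), we have $\tilde{\Ev}[\xi_1] = M'(\gamma) = \Ev[\xi_1 e^{\gamma \xi_1}] \in (0,\infty)$. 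In particular the tilted walk drifts to $+\infty$, so the hitting time $\tau_t := \inf\{n \geq 1 : S_n \geq t\}$ is $\tilde{\Pv}$-a.s.\ finite for every $t \geq 0$.

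Next I would apply the change of measure at the stopping time $\tau_t$ to write
\[
\Pv\!\left(\sup_{n\geq 1} S_n \geq t\right) = \Pv(\tau_t < \infty) = \tilde{\Ev}\!\left[ e^{-\gamma S_{\tau_t}} \right] = e^{-\gamma t}\,\tilde{\Ev}\!\left[ e^{-\gamma (S_{\tau_t} - t)} \right],
\]
so that the claim reduces to showing $\tilde{\Ev}[e^{-\gamma (S_{\tau_t} - t)}]$ converges to a positive constant. Because the overshoot $S_{\tau_t} - t$ is determined by the sequence of strict ascending ladder heights $H_1, H_2, \ldots$ of $S_n$ under $\tilde{\Pv}$, this is a standard renewal-overshoot problem for the renewal process with inter-arrival distribution equal to the law of the first ladder height $H_1$ under $\tilde{\Pv}$. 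The hypothesis $\Ev[\xi_1 e^{\gamma \xi_1}] < \infty$ ensures $\tilde{\Ev}[H_1] < \infty$ (via Wald's identity applied on the ladder epoch, or via the standard fact that $\tilde{\Ev}[H_1] < \infty$ whenever $\tilde{\Ev}[\xi_1^+]^2 < \infty$, which here follows from finite tilted mean plus positivity of the drift).

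In the non-lattice case, the distribution of $H_1$ under $\tilde{\Pv}$ is non-lattice, so the key renewal theorem yields a limiting stationary overshoot distribution $\mu$ with density proportional to $\tilde{\Pv}(H_1 > y)/\tilde{\Ev}[H_1]$, giving
\[
\lim_{t\to\infty} \tilde{\Ev}\!\left[ e^{-\gamma (S_{\tau_t} - t)} \right] = \int_0^\infty e^{-\gamma y}\,\mu(dy) =: C,
\]
and $C > 0$ because the integrand is strictly positive. In the lattice case $\Pv(\xi_1 \in a\Z) = 1$, the law of $H_1$ is supported on $a\N$, and the discrete renewal theorem applied along $t = ka$ gives the analogous limit. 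The main obstacle in writing this up carefully will be the justification that the ladder-height law under the tilt is non-lattice (resp.\ $a$-lattice) with finite mean---this is where the technical hypothesis $\Ev[\xi_1 e^{\gamma \xi_1}] < \infty$ is used---and invoking the correct form of the renewal theorem to identify the overshoot limit; everything else is bookkeeping around the Esscher transform.
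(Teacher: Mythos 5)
Your proposal is correct and follows essentially the same route as the paper: the Esscher/Cram\'er change of measure $dQ/d\Pv = e^{\gamma S_n}$, optional stopping at $\tau_t$ to reduce the claim to convergence of $\tilde{\Ev}[e^{-\gamma(S_{\tau_t}-t)}]$, and the renewal theorem for the limiting overshoot distribution (the paper cites Iglehart for the non-lattice case and writes out only the lattice case, taking the renewal increments to be copies of $S_{\tau(a)}$ and verifying $E_Q[\tau(a)]<\infty$ via a Chernoff bound at $\gamma/2$ plus Wald, rather than invoking ladder-height theory as you do). The only blemish is your parenthetical fallback condition ``$\tilde{\Ev}[\xi_1^+]^2<\infty$'', which is not the relevant criterion here; but your primary justification via Wald at the ladder epoch under the positive, finite tilted drift is sound.
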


\begin{rem}\label{remlattice}
 Part \ref{p1} of Proposition \ref{RWhp} is the content of \cite[Lemma 1]{iEVQ}. 
The proof of the lattice case in part \ref{p2} is essentially the same as the proof of the non-lattice case in \cite{iEVQ}, but we will include the proof here for completeness since the proof is short. 
\end{rem}

\begin{proof}[Proof of part \ref{p2} of Proposition \ref{RWhp}]
%
The key to the proof of the proposition is the following change of measure. Let $Q$ be a measure on sequences $(\xi_1,\xi_2,\ldots)$ with Radon-Nykodym derivative given by 
\[
 \frac{dQ}{d\Pv}(\xi_1,\ldots,\xi_n) = e^{\gamma S_n}. 
\]
Expectations with respect to the measure will be denoted by $E_Q$. 
Note that $Q$ is a probability measure since $\Ev[e^{\gamma S_n} ] = \Ev[ e^{\gamma \xi_1} ]^n = 1$. 
Also, since $e^{\gamma S_n} = \prod_{i=1}^n e^{\gamma \xi_i}$ it follows that a sequence $(\xi_1,\xi_2,\ldots)$ with distribution $Q$ is i.i.d.\ with mean $E_Q[ \xi_1] = \Ev[ \xi_1 e^{\gamma \xi_1} ]$. 
Since $x \log x$ is convex, it follows from Jensen's inequality that $\Ev[ \xi_1 e^{\gamma \xi_1} ] > \Ev[ e^{\gamma \xi_1} ] \log \Ev[ e^{\gamma \xi_1} ] = 0$ (note that the inequality is strict since the assumptions of the proposition imply that the distribution of $\xi_1$ is non-degenerate). 
Therefore, $E_Q[\xi_1] \in (0,\infty)$. 

For any $t\geq 0$ let $\tau(t) = \inf\{ n\geq 1: \, S_n \geq t \}$ be the stopping time for the first time the random walk $S_n$ goes above level $t$. Note that the event $\{ \sup_{n\geq 1} S_n \geq t \} = \{\tau(t) < \infty \}$, and since $\tau(t)$ is a stopping time this event only depends on $\xi_1,\xi_2,\ldots \xi_{\tau(t)}$. 
Therefore, applying the change of measure defined above we obtain that 
\[
 \Pv\left( \sup_{n\geq 1} S_n \geq t \right) 
= \Ev\left[ \ind{\tau(t) < \infty} \right] 
= E_Q\left[ e^{-\gamma S_{\tau(t)}} \ind{\tau(t) < \infty} \right] 
= E_Q\left[ e^{-\gamma S_{\tau(t)}} \right],
\]
where in the last equality we can drop the indicator of the event $\{\tau(t) < \infty\}$ since the fact that $E_Q[\xi_1] > 0$ implies that $Q(\tau(t) < \infty) = 1$. 

Since we are only considering the lattice case $\Pv( \xi_1 \in a \Z) = 1$ for some $a>0$, we need only to show that the limit 
\begin{equation}\label{agelimit}
 \lim_{k\ra\infty} E_Q\left[ e^{-\gamma (S_{\tau(ak)} - ak)}  \right] 
\end{equation}
exists. 
This will follow from results in renewal theory since $S_{\tau(ak)} - ak$ is the ``age'' of a renewal process at time $ak$ where the renewal increments have distribution $S_{\tau(a)}$. 
Note that Wald's identity implies that $E_Q[ S_{\tau(a)} ] = E_Q[ \xi_1 ] E_Q[\tau(a)] = \Ev[\xi_1 e^{\gamma \xi_1} ]  E_Q[\tau(a)]$ and thus if we show $E_Q[\tau(a)]<\infty$ then it will follow from standard results in renewal theory that $S_{\tau(ak)} - ak$ converges in distribution as $k\ra \infty$ (see \cite[Section 6.3]{lISP}).
To this end, note that
\[
 Q( \tau(a) > n ) \leq Q( S_n < a ) 
\leq e^{\gamma a/2} E_Q[ e^{-(\gamma/2) S_n} ] 
= e^{\gamma a/2} \Ev[ e^{ (\gamma/2) S_n} ]
= e^{\gamma a/2} \Ev[ e^{ (\gamma/2) \xi_1} ]^n.
\]
Since $u\mapsto \Ev[ e^{u \xi_1}]$ is convex as a function of $u$ and $\Ev[ e^{u \xi_1}]=1$ at $u=0$ and $u=\gamma$, 
it follows that $\Ev[ e^{ (\gamma/2) \xi_1} ] < 1$, and thus $\tau(a)$ has exponential tails under the measure $Q$. In particular, this implies that $E_Q[ \tau(a)]< \infty$ and so $S_{\tau(ak)} - ak$ converges in distribution and the limit \eqref{agelimit} exists. 
\end{proof}

The second result in this section concerns the behavior of the random walk prior to dropping below a certain level. 
Let
\[
 \nu(t) = \inf \{ n\geq 1: S_n \leq -t \}, \quad t \geq 0,
\]
be the first time the random walk $S_n$ drops below $-t$. 
\begin{prop}\label{grwxsum}
 Let $\Ev[ \xi_1] \in (-\infty,0)$, and assume that $\Ev[ e^{u \xi_1}] < \infty$ for some $u>0$. 
Then, there exists a constant $C <\infty$ such that
\[
 \Ev\left[ \sum_{n=0}^{\nu(t)-1} e^{-S_n} \right] \leq C e^t, \quad\forall t>0. 
\]
\end{prop}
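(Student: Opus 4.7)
The plan is to decompose the sum according to the level $S_n$ attains. Since $n < \nu(t)$ forces $S_n > -t$, setting $N_k := \#\{0 \leq n < \nu(t): S_n \in [k, k+1)\}$ gives $N_k = 0$ for $k \leq -\lceil t \rceil - 1$, and the elementary bound $e^{-S_n} \leq e^{-k}$ when $S_n \in [k, k+1)$ yields
\[
\sum_{n=0}^{\nu(t)-1} e^{-S_n} \leq \sum_{k \geq -\lceil t \rceil} e^{-k} N_k.
\]
Writing $G(k) := \Ev[\#\{n \geq 0: S_n \in [k, k+1)\}] \geq \Ev[N_k]$, the whole proof reduces to two estimates: (a) $C_0 := \sup_{k \in \Z} G(k) < \infty$, and (b) there exist $\gamma, C_1 > 0$ with $G(k) \leq C_1 e^{-\gamma k}$ for all $k \geq 0$. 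Granted these, the bound splits as
\[
\Ev\left[\sum_{n=0}^{\nu(t)-1} e^{-S_n}\right] \leq C_0 \sum_{k = -\lceil t \rceil}^{-1} e^{-k} + C_1 \sum_{k \geq 0} e^{-(1+\gamma) k} \leq C e^t,
\]
the first sum being a geometric series of total size $O(e^t)$ and the second being $O(1)$.

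For (a), I would apply the strong Markov property at $\tau_k := \inf\{n \geq 0: S_n \in [k, k+1)\}$. On $\{\tau_k < \infty\}$, setting $y^{\ast} := S_{\tau_k} \in [k, k+1)$, the post-$\tau_k$ visits of $S$ to $[k, k+1)$ correspond via independence of increments to visits of a fresh copy of the walk started at $0$ to $[k - y^{\ast}, k+1 - y^{\ast}) \subset [-1, 1)$. Hence
\[
G(k) \leq \Ev\bigl[\#\{n \geq 0: S_n \in [-1, 1)\}\bigr] \leq \sum_{n \geq 0} \Pv(S_n \geq -1).
\]
A Chernoff bound $\Pv(S_n \geq -1) \leq e^u \Ev[e^{u \xi_1}]^n$ with $u > 0$ chosen small enough that $\Ev[e^{u \xi_1}] < 1$ (possible because the MGF is convex, equals $1$ at $u=0$, and has derivative $\Ev[\xi_1] < 0$ there) turns the right-hand side into a finite geometric sum, giving $C_0 < \infty$. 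For (b), the same Chernoff estimate yields $\Pv(\tau_k < \infty) \leq \Pv(\sup_n S_n \geq k) \leq \sum_n \Pv(S_n \geq k) \leq C e^{-\gamma k}$, and combining with strong Markov gives $G(k) \leq C_0 \cdot C e^{-\gamma k}$ for $k \geq 0$.

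The most technical point is (a): uniformly in the starting position inside a unit interval, the walk spends only a bounded expected amount of time there. The exponential moment hypothesis enters precisely at this step, converting the qualitative negative-drift picture into a quantitative summable tail; everywhere else the argument is essentially bookkeeping on geometric series.
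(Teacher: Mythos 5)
Your proof is correct, but it takes a genuinely different route from the paper's. You decompose in \emph{space}: you slice the pre-$\nu(t)$ trajectory according to which unit interval $[k,k+1)$ the walk occupies, bound $e^{-S_n}\le e^{-k}$ there, and reduce everything to a uniform bound on the occupation quantity $G(k)=\Ev[\#\{n\ge 0: S_n\in[k,k+1)\}]$, obtained via the strong Markov property at the first entrance to the interval together with the Chernoff bound $\Pv(S_n\ge -1)\le e^{u}\Ev[e^{u\xi_1}]^{n}$ for a $u>0$ with $\Ev[e^{u\xi_1}]<1$. The paper instead decomposes in \emph{time}: writing $\phi(t)$ for the expectation in question, it first checks $\phi(t)<\infty$, then splits $[0,\nu(k))$ into $[0,\nu(k-1))$ and $[\nu(k-1),\nu(k))$, applies the strong Markov property at $\nu(k-1)$ to obtain the recursion $\phi(k)\le \phi(k-1)+e^{k}\phi(1)$, and concludes by induction. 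Both arguments rest on exactly the same analytic input, namely the existence of $\delta>0$ with $\Ev[e^{\delta\xi_1}]<1$ coming from convexity of the moment generating function and the negative drift. Your version proves the slightly stronger statement that the expected occupation over \emph{all} time of the region above level $-t$ is $O(e^{t})$, while the paper's recursion is a bit shorter because it never needs a uniform-in-$k$ occupation bound. Two cosmetic points: your step (b) is superfluous, since the uniform bound $G(k)\le C_0$ already makes $\sum_{k\ge 0}e^{-k}G(k)=O(1)$; and after the strong Markov step the shifted interval lies in $(-1,1]$ rather than $[-1,1)$, which is harmless since you immediately pass to the bound $\sum_{n}\Pv(S_n\ge -1)$.
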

\begin{proof}
For convenience of notation, let $\phi(t) = \Ev[ \sum_{n=0}^{\nu(t)-1} e^{-S_n}]$. We first show that $\phi(t) < \infty$ for all $t<\infty$. 
Since $e^{-S_n} < e^t$ for all $n<\nu(t)$, it follows that $\phi(t) \leq e^t \Ev[ \nu(t) ] $. 
To show that $\Ev[\nu(t)] < \infty$, note that for any $\d>0$
\[
 \Pv(\nu(t) > n) \leq \Pv(S_n > -t) \leq e^{\d t} \Ev[e^{\d S_n} ] = e^{\d t} \Ev[ e^{\d \xi_1}]^n. 
\]
Since $u\mapsto \Ev[ e^{u \xi_1}]$ is a convex function of $u$ with right derivative at $u=0$ equal to $\Ev[\xi_1] < 0$, there exists a $\d>0$ such that $\Ev[ e^{\d \xi_1}] < 1$. Thus, we can conclude that $\nu(t)$ has exponential tails and therefore $\Ev[\nu(t)] < \infty$. 
(Note that the above argument is enough to conclude that $\phi(t) \leq C e^{(1+\d)t}$ for some $C<\infty$ depending on $\d>0$.)

Since $\phi(t)$ is non-decreasing as a function of $t$, it is enough to prove that $\phi(k) \leq C e^k$ for all integers $k\geq 1$. 
By conditioning on $S_{\nu(k-1)}$,
\begin{align}
 \phi(k) &= \Ev\left[ \sum_{n=0}^{\nu(k-1)-1} e^{-S_n} \right] +  \Ev\left[ \sum_{n=\nu(k-1)}^{\nu(k)-1} e^{-S_n}  \ind{-k < S_{\nu(k-1)} \leq -k+1} \right] \nonumber \\
&\leq \phi(k-1) + e^k \Ev\left[ \sum_{n=\nu(k-1)}^{\nu(k)-1} e^{-(S_n-S_{\nu(k-1)}) } \ind{-k < S_{\nu(k-1)} \leq -k+1}  \right]\nonumber\\
&= \phi(k-1) + e^k \Ev\left[ \phi(k+S_{\nu(k-1)})  \ind{-k < S_{\nu(k-1)} \leq -k+1} \right] \nonumber\\
&\leq \phi(k-1) + e^k \phi(1), \label{phikub}
\end{align}
where the equality in the second to last line follows from the strong Markov property, and the last inequality follows from the fact that $\phi(t)$ is non-decreasing. It follows from \eqref{phikub} and induction that $\phi(k) \leq \frac{e^{k+1}-1}{e-1} \phi(1)$ for all $k\geq 1$. 
\end{proof}

\section{Exact RWRE calculations}\label{sec:exact}

Much of what is known about one-dimensional RWRE is due to the fact that certain probabilities and expectations of interest can be calculated explicitly. In preparation for the proofs of Theorems \ref{thm:st} and \ref{thm:srt} we will first review some of these formulas. 
We begin by introducing some notation that will help make these formulas more compact. 
Recall the definition of $\rho_x$ in \eqref{rhodef}, and for integers $i\leq j$ let 
\[
 \Pi_{i,j} = \prod_{x=i}^j \rho_x, \qquad 
R_{i,j} = \sum_{k=i}^j \Pi_{i,k}, \qquad \text{and}\qquad 
R_i = \sum_{k=i}^\infty \Pi_{i,k}.  
\]
With this notation we have the following formulas (the proofs of these formulas are easy Markov chain calculations and can be found in \cite{zRWRE}).  

\noindent\textbf{Hitting probabilities.}
Let the hitting times of the random walk be denoted by
\[
 T_x = \inf\{ n\geq 0: X_n = x \}, \quad x \in \Z. 
\]
Then, for any $a \leq x \leq b$ it is known that
\begin{equation}\label{htform}
 P_\w^x( T_a < T_b ) = \frac{ \Pi_{a,x-1} R_{x,b-1} }{R_{a,b-1}} \quad \text{and} \quad P_\w^x( T_a > T_b ) = \frac{ R_{a,x-1} }{R_{a,b-1}}.  
\end{equation}

\noindent\textbf{Quenched expectations of hitting times.}
For any environment $\w$ and any $x \in \Z$, 
\begin{equation}\label{ETform}
 E_\w^x\left[ T_{x+1} \right] = 1 + 2 \sum_{i\leq x} \Pi_{i,x}. 
\end{equation}
Of course, the sum in the above formula may possibly be infinite. However, if the RWRE is transient to the right then $E_P[\log \rho_i] < 0$ and thus the law of large numbers implies that $\Pi_{i,x} = \exp\{ \sum_{j=i}^x \log \rho_j \}$ decreases exponentially as $i\ra -\infty$ so that the sum in \eqref{ETform} converges almost surely. 
Similar reasoning shows that $E_\w^x[T_{x+1}] = \infty $ when $E_P[\log \rho_0] \geq 0$ (i.e., when the walk is recurrent or transient to the left). 
In this paper we will also need formulas for the expected values of hitting times to the left. From \eqref{ETform} and an obvious symmetry argument, one obtains
\begin{equation}\label{ETformleft}
 E_\w^x\left[ T_{x-1} \right] = 1 + 2 \sum_{i\geq x} \Pi_{x,i}^{-1}. 
\end{equation}
Before concluding this section, we note that the formulas for the limiting speed of the RWRE in \eqref{speedform} are derived from \eqref{ETform} and \eqref{ETformleft}. 
In particular, if the RWRE is transient to the right then it can be shown that $\lim_{n\ra\infty} X_n/n = \lim_{n\ra\infty} n/T_n = 1/\E[T_1]$. 
The formula for the speed in this case is then given by the fact that 
\begin{align*}
 \E[T_1] = E_P[ E_\w[T_1] ] &= 1 + 2 \sum_{i\leq 0} E_P[\Pi_{i,0}] \\
&= 1 + 2 \sum_{k= 0}^{\infty} E_P[\rho_0]^k 
=\begin{cases}
   \frac{1+E_P[\rho_0]}{1-E_P[\rho_0]} & \text{if } E_P[\rho_0] < 1 \\
   \infty & \text{if } E_P[\rho_0] \geq 1,
  \end{cases}
\end{align*}
where the second equality follows from \eqref{ETform} and the third equality follows from the fact that the environment was i.i.d.\ under the measure $P$.
The formula for the speed when the walk is transient to the left follows similarly from \eqref{ETformleft}.

\section{Quenched strong transience}\label{sec:QST}

In this section we will prove part \ref{qst} of Theorem \ref{thm:st}. 
The assumption that $E_P[\log \rho_0] \in (-\infty,0)$ implies that $P_\w( \mathfrak{R} <\infty) \geq 1-\w_0 > 0$ for $P$-a.e.\ environment $\w$. 
Therefore, we need only to show that $E_\w[  \mathfrak{R} \ind{ \mathfrak{R}<\infty} ]< \infty$. 
By conditioning of the first step of the walk, 
\begin{align}
 E_\w[  \mathfrak{R} \ind{ \mathfrak{R}<\infty} ] &= 1 + (1-\w_0) E_\w^{-1}[ T_0 \ind{T_0 < \infty} ] + \w_0 E_\w^1[ T_0 \ind{T_0 < \infty} ] \nonumber \\
&= 1 + (1-\w_0) E_\w^{-1}[ T_0 ] + \frac{\w_0R_1}{1+R_1} E_\w^1[ T_0 | \, T_0 < \infty ], \label{EwR1R}
\end{align}
where in the last equality we dropped the indicator from the first expectation since the walk is transient to the right, and we used that $P_\w^1( T_0 < \infty) = \frac{R_1}{1+R_1}$ from the quenched hitting time formulas in \eqref{htform}. 
It follows from the discussion following \eqref{ETform} that $E_\w^{-1}[T_0]$ is almost surely finite. 
Therefore, we need only to show that $E_\w^1[ T_0 | \, T_0 < \infty] < \infty$ for $P$-a.e.\ environment $\w$. 
To this end, note that conditioned on the event $\{T_{0} < \infty\}$ the law of the random walk until the stopping time $T_0$ is equal to that of a random walk in the environment $\tilde{\w} = \{\tilde{\w}_x \}_{x \in \Z}$ given by $\tilde{\w}_x = \w_x $ for $x\leq 0$ and 
\[
 \tilde{\w}_x = \frac{\w_x P_\w^{x+1}(T_{0} < \infty )}{P_w^x(T_{0} < \infty)} 
 = \frac{\w_x \frac{\Pi_{0,x} R_{x+1}}{R_0} }{ \frac{\Pi_{0,x-1} R_{x}}{R_0} } 
 = \frac{ \w_x R_{x+1}}{ 1 + R_{x+1} }, 
 \quad \text{ for } x \geq 1.
\]
(See \cite[page 78]{cgzLDP} for more details.)
Note that if we define $\tilde{\rho}_x = \frac{1-\tilde\w_x}{\tilde\w_x}$
and $\tilde{\Pi}_{i,j} = \prod_{x=i}^j \tilde{\rho}_x$, then we have that 
\[
 \tilde{\rho}_x = \frac{1+(1-\w_x) R_{x+1}}{\w_x R_{x+1}} = \frac{1+R_x}{R_{x+1}} =   \frac{1+R_x}{\rho_{x+1}(1+R_{x+2})} , \quad \forall x\geq 1, 
\]
and thus
\[
 \tilde{\Pi}_{i,j}
 = \frac{(1+R_i)(1+R_{i+1})}{\Pi_{i+1,j+1}(1+R_{j+1})(1+R_{j+2})}
 = \frac{(1+R_i)R_i}{\Pi_{i,j}(1+R_{j+1})R_{j+1} }, \quad \forall 1\leq i \leq j. 
\]
Using the explicit formula for quenched expectations of hitting times in \eqref{ETformleft}, we obtain that 
\begin{align}
 E_\w^1[ T_{0} \, | \, T_{0} < \infty] 
 = E_{\tilde{\w}}^1[ T_{0} ]
 &= 1 + 2 \sum_{n=1}^\infty (\tilde{\Pi}_{1,n} )^{-1} \nonumber \\
 &= 1 + 2 \sum_{n=1}^\infty \Pi_{1,n} \frac{(1+R_{n+1})R_{n+1}}{(1+R_1)R_1}.  \label{cqET}
\end{align}
To prove that the sum in \eqref{cqET} is finite, let $c_0 := -E_P[\log \rho_0] \in (0,\infty)$ and fix an $\e \in (0,c_0/5)$.  Then, the strong law of large numbers implies that for $P$-a.e.\ environment $\w$ there exists a finite integer $n_1(\w,\e)$ such that 
\begin{equation}\label{Pilbub}
 e^{-(c_0+\e)n} \leq \Pi_{1,n} \leq e^{-(c_0-\e)n}, \quad \forall n \geq n_1(\w,\e). 
\end{equation}
Note that this implies that $\Pi_{n+1,n+k} = \frac{\Pi_{1,n+k}}{\Pi_{1,n}} \leq \frac{e^{-(c_0-\e)(n+k)}}{e^{-(c_0+\e)n}} = e^{2\e n} e^{-(c_0-\e)k}$ for $n\geq n_1(\w,\e)$ and $k\geq 1$, and thus
\begin{equation}\label{Rub}
 R_{n+1}(1+R_{n+1}) \leq 
\left(1 + \sum_{k=1}^\infty \Pi_{n+1,n+k} \right)^2 \leq \frac{e^{4\e n}}{1-e^{-c_0+\e}}, \quad \forall n\geq n_1(\w,\e). 
\end{equation}
Therefore, \eqref{Pilbub} and \eqref{Rub} imply that 
$\Pi_{1,n}R_{n+1}(1+R_{n+1}) \leq \frac{e^{-(c_0-5\e)n}}{1-e^{-c_0+\e}}$ for all $n\geq n_1(\w,\e)$. 
Since we chose $\e<c_0/5$ this shows that the sum in \eqref{cqET} is almost surely finite.

\section{Averaged strong transience}\label{sec:AST}

We now turn to the results on strong transience under the averaged measure: Theorem \ref{thm:st}\ref{ast} and Theorem \ref{thm:srt}. 

\begin{proof}[Proof of Theorem \ref{thm:st}\ref{ast}]
Since $\P( \mathfrak{R} <\infty) \geq E_P[ 1-\w_0 ] > 0$, strong transience is equivalent to $\E[ \mathfrak{R} \ind{ \mathfrak{R}<\infty}] < \infty$. 
Averaging \eqref{EwR1R} with respect to the measure $P$ on environments we obtain that 
\begin{align}
 \E[ \mathfrak{R} \ind{ \mathfrak{R}<\infty}]
&= 1 + E_P\left[ (1-\w_0) E_\w^{-1}[T_0] \right] + E_P\left[ \frac{\w_0R_1}{1+R_1} E_\w^1[ T_0 | \, T_0 < \infty ] \right] \nonumber  \\
&= 1 + E_P[1-\w_0]E_P\left[E_\w^{-1}[T_0] \right]  + E_P[\w_0] E_P\left[  \frac{R_1}{1+R_1} E_\w^1[ T_0 | \, T_0 < \infty ] \right] \nonumber \\
&= 1 + E_P[1-\w_0]\E[T_1]  + E_P[\w_0] E_P\left[  \frac{R_1}{1+R_1} E_\w^1[ T_0 | \, T_0 < \infty ] \right], \label{ER1R}
\end{align}
where in the second equality we used that the environment $\{\w_x\}_{x\in\Z}$ is an i.i.d.\ sequence under the measure $P$ (note that $E_\w^{-1}[T_0]$ depends only on $\w_x$ with $x\leq -1$ and $E_\w^1[T_0 | \, T_0 < \infty]$ depends only on $\w_x$ with $x\geq 1$), 
and in the last equality we used the shift invariance of the enviroment under the distribution $P$. 
As noted in Section \ref{sec:exact} above, $\E[T_1] < \infty$ if and only if $E_P[\rho_0] < 1$. 
On the other hand, the formula for $E_\w^1[ T_0 | \, T_0 < \infty ]$ in \eqref{cqET} implies that 
\begin{equation}\label{rtequiv}
  E_P\left[  \frac{R_1}{1+R_1} E_\w^1[ T_0 | \, T_0 < \infty ] \right] < \infty 
\iff
E_P\left[ \sum_{n=1}^\infty \Pi_{1,n} \frac{(1+R_{n+1})R_{n+1}}{(1+R_1)^2} \right] < \infty. 
\end{equation}
Thus, to finish the proof of Theorem \ref{thm:st}\ref{ast} it remains only to show that that the right side of \eqref{rtequiv} holds when $E_P[\rho_0] < 1$. 
To accomplish this it is helpful to use the shift-invariance of the environment to re-write the sum in the following way. 
\begin{align*}
  E_P\left[ \sum_{n=1}^\infty \Pi_{1,n} \frac{(1+R_{n+1})R_{n+1}}{(1+R_1)^2} \right]
& = \sum_{n=0}^\infty E_P \left[ \Pi_{-n,0} \frac{(1+R_{1})R_{1}}{(1+R_{-n})^2} \right] \\
& = E_P\left[ R_1(1+R_1) \sum_{n=0}^\infty \frac{ \Pi_{-n,0} }{(1+R_{-n})^2} \right]. 
\end{align*}
For any $A>0$ let 
\[
 \pi(A) = \inf\left\{ n\geq 0: \Pi_{-n,0} \leq \frac{1}{A} \right\}
= \inf\left\{ n\geq 0: \sum_{i=0}^n \log \rho_{-i} \leq - \log(A) \right\}. 
\]
Note that $1+ R_{-n} = 1+R_{-n,0} + \Pi_{-n,0}R_1$. 
Then, 
\begin{align*}
 \sum_{n=0}^\infty \frac{ \Pi_{-n,0} }{(1+R_{-n})^2}
&\leq \sum_{n=0}^{\pi(R_1)-1} \frac{ \Pi_{-n,0} }{(1+R_{-n})^2}  + \sum_{n=\pi(R_1)}^\infty \Pi_{-n,0}   \\
&\leq \sum_{n=0}^{\pi(R_1)-1} \frac{ \Pi_{-n,0} }{(\Pi_{-n,0} R_1)^2}  + \Pi_{-\pi(R_1),0} \left( 1 + \sum_{n=\pi(R_1)+1}^\infty \Pi_{-n,-\pi(R_1)-1}  \right) \\
&\leq \frac{1}{R_1} \left\{ \sum_{n=0}^{\pi(R_1)-1} \frac{ 1 }{\Pi_{-n,0} R_1} +  \left( 1 + \sum_{n=\pi(R_1)+1}^\infty \Pi_{-n,-\pi(R_1)-1}  \right)  \right\} 
\end{align*}
Multiplying by $R_1(1+R_1)$ and taking expectations we get that
\begin{align} 
& E_P\left[ R_1(1+R_1) \sum_{n=0}^\infty \frac{ \Pi_{-n,0} }{(1+R_{-n})^2} \right] \nonumber  \\
&\qquad \leq E_P\left[ \frac{1+R_1}{R_1} \sum_{n=0}^{\pi(R_1)-1} \frac{ 1 }{\Pi_{-n,0}} \right] + E_P\left[ (1+R_1) \left( 1 + \sum_{n=\pi(R_1)+1}^\infty \Pi_{-n,-\pi(R_1)-1}  \right) \right] \nonumber \\
&\qquad = E_P\left[ \frac{1+R_1}{R_1} \sum_{n=0}^{\pi(R_1)-1} \frac{ 1 }{\Pi_{-n,0}} \right] + E_P\left[1+R_1\right] E_P\left[ 1 + \sum_{n=1}^\infty \Pi_{-n,-1}  \right] \nonumber\\
&\qquad = E_P\left[ \frac{1+R_1}{R_1} \sum_{n=0}^{\pi(R_1)-1} \frac{ 1 }{\Pi_{-n,0}} \right] + \left(E_P\left[ 1+R_1 \right]\right)^2, \label{R1decomp}
\end{align}
where in the second to last equality we used that the environment to the left of the origin $\{\w_x\}_{x\leq 0}$ is independent of $R_1$ and that $\pi(A)$ is a stopping time for the sequence $(\w_0,\w_{-1},\w_{-2},\ldots)$ for any $A>0$. 
To control the first expectation in \eqref{R1decomp}, note that it follows from Proposition \ref{grwxsum} that there exists a $C<\infty$ such that 
\[
 E_P\left[ \sum_{n=0}^{\pi(A)-1} \frac{1}{\Pi_{-n,0}} \right] = E_P\left[ \sum_{n=0}^{\pi(A)-1} e^{-\sum_{i=0}^n \log \rho_{-i}} \right] \leq C A, \quad \forall A<\infty. 
\]
Again, since the environment to the left of the origin is independent of $R_1$, by conditioning on $R_1$ we obtain that 
\[
 E_P\left[ \frac{1+R_1}{R_1} \sum_{n=0}^{\pi(R_1)-1} \frac{ 1 }{\Pi_{-n,0}} \right]
= E_P\left[ \frac{1+R_1}{R_1} E_P\left[  \sum_{n=0}^{\pi(R_1)-1} \frac{ 1 }{\Pi_{-n,0}} \biggl| \, R_1 \right] \right] 
\leq C E_P\left[1+R_1 \right]
\]

Combining the above results, we have shown that 
\begin{equation}\label{rtsc}
  E_P\left[ \sum_{n=1}^\infty \Pi_{1,n} \frac{(1+R_{n+1})R_{n+1}}{(1+R_1)^2} \right]
\leq C E_P\left[1+R_1 \right] + \left( E_P\left[1+R_1 \right] \right)^2. 
\end{equation}
Since $E_P[1+R_1] = (1-E_P[\rho_0])^{-1} < \infty $ when $E_P[\rho_0] < 1$,  
we have shown that expectation on the right side of \eqref{rtequiv} is finite if $E_P[\rho_0]<1$. 
\end{proof}

\begin{proof}[Proof of Theorem \ref{thm:srt}]
Part \ref{srtp1} of Theorem \ref{thm:srt} follows from Theorem \ref{thm:st}\ref{ast}, and thus we only need to prove part \ref{srtp2} of Theorem \ref{thm:srt}. 
Therefore, for the remainder of the proof we will assume that either $E_P[\rho_0] > 1$ or $E_P[\rho_0] = 1$ and $E_P[\rho_0 \log \rho_0] < \infty$. 
A calculation similar to \eqref{ER1R} shows that 
\begin{align*}
  \E[ \mathfrak{R} | \, X_1= 1, \mathfrak{R}<\infty ] 
&= \frac{ E_P\left[ \w_0 P_\w^1(T_0<\infty) (1+ E_\w^{1}[T_0| \, T_0<\infty])\right] }{ E_P[\w_0 P_\w^1(T_0<\infty)] } \\
&= 1 + \frac{ E_P\left[ \frac{R_1}{1+R_1} E_\w^{1}[T_0| \, T_0<\infty] \right] }{ \P^1(T_0<\infty) }. 
\end{align*}
Since $\P^1(T_0<\infty) > 0$, we need only to prove that $ E_P[ \frac{R_1}{1+R_1} E_\w^{1}[T_0| \, T_0<\infty] ] = \infty$, 
which by \eqref{rtequiv} is equivalent to showing that
\begin{equation}\label{infsum}
  \sum_{n=1}^\infty E_P\left[ \Pi_{1,n} \frac{R_{n+1}^2}{(1+R_1)^2} \right] = \infty. 
\end{equation}
To prove this we will need the following lemma which follows from the general random walk result in Proposition \ref{RWhp}. 
\begin{lem}\label{Rtail}
Assume that $E_P[\log \rho_0] < 0$ and that either 
\begin{enumerate}
 \item $E_P[\rho_0] > 1$
 \item or $E_P[\rho_0] = 1$ and $E_P[\rho_0 \log \rho_0] < \infty$. 
\end{enumerate}
Then, there exists a constant $C>0$ such that $P(R_1 \geq t) \geq \frac{C}{t}$ for all $t\geq 1$. 
\end{lem}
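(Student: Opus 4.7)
My plan is to reduce the tail estimate on $R_1$ to an application of Proposition \ref{RWhp}, using the representation of $R_1$ as a sum of exponentials of a random walk.

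First I would set $\xi_i := \log \rho_i$ and $S_k := \sum_{i=1}^k \xi_i$, so that $R_1 = \sum_{k \geq 1} e^{S_k} \geq e^{\sup_{k \geq 1} S_k}$. The $\xi_i$ are i.i.d.\ with $\Ev[\xi_1] = E_P[\log \rho_0] < 0$, and the reduction
\[
P(R_1 \geq t) \geq \Pv\bigl(\sup_{k \geq 1} S_k \geq \log t\bigr), \qquad t \geq 1,
\]
means it suffices to exhibit an exponent $\gamma_0 \in (0, 1]$ satisfying the two hypotheses of Proposition \ref{RWhp}: namely $E_P[\rho_0^{\gamma_0}] = 1$ and $E_P[\rho_0^{\gamma_0} \log \rho_0] < \infty$. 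With such a $\gamma_0$ in hand, the proposition yields $\Pv(\sup_k S_k \geq s) \geq c\, e^{-\gamma_0 s}$ for all $s$ large enough (in the lattice case, by rounding $s$ up to the nearest lattice multiple at only a constant cost $e^{-\gamma_0 a}$), and hence $P(R_1 \geq t) \geq c\, t^{-\gamma_0} \geq c/t$ for all large $t$ since $\gamma_0 \leq 1$. The remaining range $t \in [1, T_0]$ is absorbed into the constant using $P(R_1 \geq t) \geq P(R_1 \geq T_0) > 0$, the positivity being a consequence of the very tail estimate one is proving.

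Next I would locate $\gamma_0$ by a convexity argument on $f(\gamma) := E_P[\rho_0^\gamma]$. This $f$ is convex with $f(0) = 1$ and right derivative $f_+'(0) = E_P[\log \rho_0] < 0$, so it dips strictly below $1$ immediately past $0$. In Case (ii) I take $\gamma_0 = 1$: here $f(1) = 1$ is the standing hypothesis and $E_P[\rho_0 \log \rho_0] < \infty$ is exactly the remaining requirement. In Case (i) with $E_P[\rho_0] \in (1, \infty)$, $f$ is continuous on $[0, 1]$ with $f(0) = 1 < f(1)$ and $f < 1$ in between, so the intermediate value theorem supplies $\gamma_0 \in (0, 1)$ with $f(\gamma_0) = 1$; since $\gamma_0$ is then an interior point of the finite-MGF domain, differentiation under the integral gives $f'(\gamma_0) = E_P[\rho_0^{\gamma_0} \log \rho_0] < \infty$.

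The hardest part will be the sub-case $E_P[\rho_0] = \infty$ of Case (i), in which the finiteness domain $[0, \gamma^*)$ of $f$ may terminate at some $\gamma^* \leq 1$. Here I would use lower semicontinuity of $f$ (via Fatou) to argue that $f(\gamma) \to \infty$ as $\gamma \uparrow \gamma^*$, so that the intermediate value theorem again supplies a root $\gamma_0 \in (0, \gamma^*) \subseteq (0, 1)$ strictly interior to the finite-MGF domain, and the same differentiation argument delivers the finite derivative $f'(\gamma_0)$. Everything else---the passage from $\sup_k S_k$ back to $R_1$ and the extension of the tail bound to small $t$---is routine given Proposition \ref{RWhp}.
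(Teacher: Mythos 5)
Your proof coincides with the paper's in every case where $E_P[\rho_0]<\infty$: locating $\kappa=\gamma_0$ by convexity of $f(\gamma):=E_P[\rho_0^\gamma]$, checking $E_P[\rho_0^{\gamma_0}\log\rho_0]<\infty$, and feeding $\xi_i=\log\rho_i$ into Proposition \ref{RWhp} via $P(R_1\geq t)\geq \Pv(\sup_k S_k\geq \log t)$ is exactly what the paper does. The gap is in the sub-case $E_P[\rho_0]=\infty$. You assert that the domain of finiteness of $f$ is half-open, $[0,\gamma^*)$, and that $f(\gamma)\to\infty$ as $\gamma\uparrow\gamma^*$ by Fatou, so that the intermediate value theorem supplies an interior root of $f=1$. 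Neither claim is justified: Fatou only gives $\liminf_{\gamma\uparrow\gamma^*}f(\gamma)\geq f(\gamma^*)$, and $f(\gamma^*)$ can be finite (with $f\equiv\infty$ immediately to the right of $\gamma^*$). Worse, one can arrange $f(\gamma^*)<1$ while $E_P[\rho_0]=\infty$ and $E_P[\log\rho_0]<0$ --- for instance, let $\rho_0=e^{-a}$ with probability $1-p$ and, with small probability $p$, give $\rho_0$ a density proportional to $x^{-1-\gamma^*}(\log x)^{-2}$ on $[e,\infty)$; then $E_P[\rho_0^{\gamma^*}]<\infty$ and can be made $<1$ by taking $p$ small and $a$ large, while $E_P[\rho_0^{\gamma}]=\infty$ for every $\gamma>\gamma^*$. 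For such environments no exponent $\gamma_0$ with $E_P[\rho_0^{\gamma_0}]=1$ exists, so your route through Proposition \ref{RWhp} cannot be completed. (Even when $f(\gamma^*)\geq 1$, the root may sit exactly at $\gamma^*$, where the required finiteness of $E_P[\rho_0^{\gamma_0}\log\rho_0]$ can fail.)

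The paper avoids all of this by truncation: if $E_P[\rho_0]=\infty$, choose $M$ large enough that $E_P[\rho_0\wedge M]\in(1,\infty)$, observe that $R_1\geq R_1^{(M)}:=\sum_{j\geq 1}\prod_{i=1}^j(\rho_i\wedge M)$, and apply the finite-mean case to the truncated environment, whose moment generating function is finite everywhere. You should replace your Fatou/IVT argument for this sub-case with that reduction. The rest of your write-up --- including the bookkeeping for the lattice case and for small $t$ --- is fine and matches the paper.
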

\begin{rem}
 It was shown by Kesten \cite{kRDE} that if the distribution of $\log \rho_0$ is non-lattice and $E_P[ \rho_0^\kappa ] = 1$ and $E_P[\rho_0^\kappa \log \rho_0 ] < \infty$ for some $\kappa > 0$, then $P(R_1 > t) \sim C t^{-\k}$ as $t\ra\infty$. If $E_P[\log \rho_0] < 0$ and $E_P[\rho_0^\k] = 1$ for some $\k \in (0,1]$, then $E_P[\rho_0] \geq 1$. Therefore, Lemma \ref{Rtail} gives rougher asymptotics than were obtained by Kesten, but under slightly less restrictive assumptions. 
\end{rem}
\begin{proof}[Proof of Lemma \ref{Rtail}]
First suppose that $E_P[\rho_0] <\infty$. 
Since the function $u \mapsto E_P[ \rho_0^u]$ is convex with right derivative equal to $E_P[\log\rho_0] < 0$ at $u=0$, then there exists a $\k>0$ such that $E_P[\rho_0^\kappa] = 1$. Moreover, $E_P[\rho_0^\kappa \log \rho_0] < \infty$ either due to the assumption of the lemma when $\kappa = 1$ or because $E_P[\rho_0] < \infty$ in the case when $\kappa < 1$. 
Then, it follows from Proposition \ref{RWhp} by letting $\xi_i = \log \rho_i$ and $\gamma =\kappa$ that  
\[
\liminf_{t\ra\infty} t^\kappa P(R_1 > t) \geq \liminf_{t\ra\infty} t^\kappa P\left( \sup_{n\geq 1} \Pi_{1,n} > t \right) 
= \liminf_{t\ra\infty} t^\kappa P\left( \sup_{n\geq 1} \sum_{i=1}^n \log \rho_i > \log t \right) > 0. 
\]
This completes the proof of the lemma in all cases except when $E_P[\rho_0] = \infty$.
If $E_P[\rho_0] = \infty$ then for $M<\infty$ large enough $E_P[ \rho_0 \wedge M ] \in (1,\infty)$, and since 
\[
 R_1 = \sum_{j=1}^\infty \prod_{i=1}^j \rho_i \geq \sum_{j=1}^\infty \prod_{i=1}^j (\rho_i \wedge M) =: R_1^{(M)}, 
\]
it follows from the first part of the proof that $P(R_1 > t) \geq P(R_1^{(M)} > t) \geq C/t$ for $t\geq 1$. 
\end{proof}

Returning now to the proof of Theorem \ref{thm:srt}, let $\mathcal{F}_n = \s( \w_x: \, x\leq n)$ be the $\s$-field generated by the environment to the left of $x=n$. 
Using the fact that $1+R_1 = 1 + R_{1,n} + \Pi_{1,n} R_{n+1}$,
\begin{align*}
 E_P \left[  \frac{\Pi_{1,n} R_{n+1}^2}{(1+R_1)^2} \right] 
 &\geq \frac{1}{4} E_P\left[  \Pi_{1,n}^{-1} \ind{\Pi_{1,n} R_{n+1} \geq 1 + R_{1,n}} \right]\\
&= \frac{1}{4} E_P\left[  \Pi_{1,n}^{-1} P\left( R_{n+1} \geq \frac{1 + R_{1,n}}{\Pi_{1,n} } \biggl| \, \mathcal{F}_n \right) \right].
\end{align*}
Since the assumptions of Lemma \ref{Rtail} are satisfied and $R_{n+1}$ is independent of $\mathcal{F}_n$, the conditional probability in the last line above is bounded below by $C \Pi_{1,n}/(1+R_{1,n})$ for some $C>0$ (note that we used $(1+R_{1,n})/\Pi_{1,n} \geq (1 + \Pi_{1,n})/\Pi_{1,n} > 1$ here).
Therefore, we can conclude that 
\[
 E_P \left[  \frac{\Pi_{1,n} R_{n+1}^2}{(1+R_1)^2} \right] \geq \frac{C}{4} E_P\left[ \frac{1}{1+R_{1,n}} \right] 
\geq \frac{C}{4} E_P\left[ \frac{1}{1+R_1} \right] > 0. 
\]
Clearly this implies that \eqref{infsum} holds, and thus this finishes the proof of the second part of Theorem \ref{thm:srt}. 
\end{proof}

\bibliographystyle{alpha}
\bibliography{RWRE}

\def\cprime{$'$}
\begin{thebibliography}{CGZ00}

\bibitem[BS08]{bsRGCRW}
Anne-Laure Basdevant and Arvind Singh.
\newblock {Rate of growth of a transient cookie random walk}.
\newblock {\em Electron. J. Probab.}, 13:no. 26, 811--851, 2008.

\bibitem[CGZ00]{cgzLDP}
Francis Comets, Nina Gantert, and Ofer Zeitouni.
\newblock {Quenched, annealed and functional large deviations for
  one-dimensional random walk in random environment}.
\newblock {\em Probab. Theory Related Fields}, 118(1):65--114, 2000.

\bibitem[Igl72]{iEVQ}
Donald~L. Iglehart.
\newblock {Extreme values in the {$GI/G/1$} queue}.
\newblock {\em Ann. Math. Statist.}, 43:627--635, 1972.

\bibitem[Kes73]{kRDE}
Harry Kesten.
\newblock {Random difference equations and renewal theory for products of
  random matrices}.
\newblock {\em Acta Math.}, 131:207--248, 1973.

\bibitem[KKS75]{kksStable}
H.~Kesten, M.~V. Kozlov, and F.~Spitzer.
\newblock {A limit law for random walk in a random environment}.
\newblock {\em Compositio Math.}, 30:145--168, 1975.

\bibitem[KM11]{kmLLCRW}
Elena Kosygina and Thomas Mountford.
\newblock {Limit laws of transient excited random walks on integers}.
\newblock {\em Ann. Inst. Henri Poincar{\'e} Probab. Stat.}, 47(2):575--600,
  2011.

\bibitem[KZ08]{kzPNERW}
Elena Kosygina and Martin P.~W. Zerner.
\newblock {Positively and negatively excited random walks on integers, with
  branching processes}.
\newblock {\em Electron. J. Probab.}, 13:no. 64, 1952--1979, 2008.

\bibitem[KZ14]{kzEERW}
Elena Kosygina and Martin P.~W. Zerner.
\newblock {Excursions of excited random walks on integers}.
\newblock {\em Electron. J. Probab.}, 19:no. 25, 25, 2014.

\bibitem[Law06]{lISP}
Gregory~F. Lawler.
\newblock {\em {Introduction to stochastic processes}}.
\newblock Chapman \& Hall/CRC, Boca Raton, FL, second edition, 2006.

\bibitem[Sol75]{sRWRE}
Fred Solomon.
\newblock {Random walks in a random environment}.
\newblock {\em Ann. Probability}, 3:1--31, 1975.

\bibitem[Zei04]{zRWRE}
Ofer Zeitouni.
\newblock {Random walks in random environment}.
\newblock In {\em {Lectures on probability theory and statistics}}, volume 1837
  of {\em {Lecture Notes in Math.}}, pages 189--312. Springer, Berlin, 2004.

\end{thebibliography}


\end{document}